\DeclarePairedDelimiter{\ceil}{\lceil}{\rceil}
\definecolor{1}{rgb}{1,0.2,0.3}
\definecolor{2}{rgb}{0.1,0.3,0.5}
\definecolor{3}{rgb}{1,1,0}
\definecolor{4}{rgb}{255,255,255}
\newtheorem{theorem}{Theorem}[section]
\newtheorem{lemma}[theorem]{Lemma}
\theoremstyle{definition}
\theoremstyle{remark}
\begin{document}

\tikzset
{
  x=1in,
  y=1in,
}

\title[No perfect Mondrian partition for squares with side lengths $<$ 1001]{There is no perfect Mondrian partition for squares of side lengths less than 1001.}

\author{Natalia Garc\'ia-Col\'in}
\address{Natalia Garc\'ia-Col\'in, Département d'Informatique, Université Libre de Bruxelles and Department of Statistical Learning, ScaDS.AI Leipzig}
\
\email{natalia.garcia.colin@ulb.be}

\author{Dimitri Leemans}
\address{Dimitri Leemans, Département de Mathématique, Université libre de Bruxelles, and Department of Statistical Learning, ScaDS.AI Leipzig}
\email{leemans.dimitri@ulb.be}

\author{Mia Müßig}
\address{Mia Müßig, Ludwig Maximilian University of Munich, and Department of Statistical Learning, ScaDS.AI Leipzig}
\email{nienna@miamuessig.de}

\author{\'Erika Rold\'an}
\address{\'Erika Rold\'an, Max Planck Institute for Mathematics in the Sciences and 
Department of Statistical Learning, ScaDS.AI Leipzig}
\email{roldan@mis.mpg.de}

\keywords{Tesselations, Mondrian problem}

\subjclass[2020]{52C20, 52-08}

\date{\today}
\maketitle

\begin{abstract}
In mathematics, a dissection of a square (or rectangle) into non-congruent rectangles is a Mondrian partition. If all the rectangles have the same area, it is called a perfect Mondrian partition. In this paper, we present a computational result by which we can affirm that there is no perfect Mondrian partition of a length $n$ square for $n\leq 1000$. Using the same algorithm we have been able to establish that there is no perfect Mondrian partition of a $n \times m$ rectangle for $n,m \leq 400$.
\end{abstract}

\section{Introduction}

In the Journal of the Archimedeans (Cambridge University Mathematical Society), number 34, October 1971~\cite{BlancheEureka}, Blanche Decartes\footnote{Collaborative pseudonym used by the English mathematicians R. Leonard Brooks, Arthur Harold Stone, Cedric Smith, and W. T. Tutte.} published a note by the title of \emph{``Division of a square into rectangles''}  as a curious extension of their classic paper \emph{``The dissection of rectangles into squares"}  \cite{DissectionSquares}. In the former, they pose and solve the following problem:

``... Instead of dividing a square into rectangles of different sizes but all of the same shape (namely squares), \emph{one can divide it into rectangles of different shapes but of the same area}. This leads to a system of nonlinear equations. The simplest solution is that shown in Fig ...".

Their solution \cite{BlancheEureka} shows that the smallest configuration has seven rectangles of irrational side lengths. Furthermore they provide a general construction for dividing any square into $7+n$ equal area rectangles of different shapes for any $n>0$. This construction became known in the literature as a Blanche's Dissection~\cite{BlancheDissectionWolfram}.

When one adds the extra requirement that the lengths of the sides of the rectangles are integers, this dissection problem is known as the \emph{Mondrian art problem } in reference to the famous works of the dutch artist Piet Mondrian. A dissection of a square (or rectangle) into non-congruent rectangles of equal area and integer lengths is referred to as a \emph{perfect Mondrian partition}.  It is not known whether a perfect Mondrian partition exists at all. However there has been research related to the so called \emph{defect} of a Mondrian partition which is defined as the difference between the area of the smallest rectangle and the biggest rectangle. Thus, a perfect Mondrian partition has defect zero.

Traditionally the quest for perfect Mondrian partitions has focused on squares, in this context $M(n)$ denotes the \emph{Mondrian number} of a square of size $n$, defined as the minimum defect among all possible Mondrian partitions of a square of size $n$. As an $n \times n$ square can be partitioned into two non-congruent parts of roughly equal size, a natural upper bound for $M(n)$ is $n$ when $n$ is odd and $2n$ when $n$ is even.

It has been proven computationally that $M(n) > 0$ for $n\leq 65$, furthermore all $M(n)$ numbers for $n\leq 65$ are known and can be found as sequence A276523 of  \emph{The On-Line Encyclopedia of Integer Sequences}.  The researchers who found the sequence have conjectured that $M(n) \leq \lceil(n/\log(n))\rceil+3$. This bound is computationally proven to hold, through the construction of examples, up to n=96 (see~\cite{oeisMondrianDefect} and~\cite{pegg}).

A different approach,  introduced in \cite{o2018mondrian} is to study the properties of the number $x(n)= | \{x \in \mathbb{N}, |  x \leq n \text{ and } M(x) \neq 0\} |$. This number accounts for all the different numbers smaller than or equal to $n$ for which there is not a perfect Mondrian configuration. Thus, if there exists a perfect Mondrian configuration for a value $n^*$, necessarily $x(m)<m$ for any $m\geq n^*$. For example, the results of the previous paragraph imply $x(65)=65.$ In \cite{o2018mondrian} they prove a lower bound for $x(m)$, which was later improved in \cite{dalfo2021new}, namely $\frac{m}{log(m)} (1+\log(log(m))+\frac{(log(log(m)))^{2}}{2}) < x(m)$. This bound is far from being tight. 

There is other recent work in probability inspired by Mondrian partitions. For example, in  \cite{roy2008mondrian}, a family of geometric stochastic processes that resemble or are inspired by Mondrian-like configurations has been defined. These Mondrian processes give, in particular, ways of generating interesting random graph structures such as random forests \cite{o2022stochastic, martinez2023tessellation, mourtada2021amf}.

In this paper we present an algorithm that helps us conclude that there is no perfect Mondrian partition of a length n square for $n\leq 1000$  and that there is no perfect Mondrian partition of a $n \times m$ rectangle for $n,m \leq 400$. 

The results are presented as follows. 
In Section~\ref{sec:Preliminaries} we prove that a perfect Mondrian partition, if it exists, needs to have at least seven pieces. 
In Section~\ref{sec:Algorithm}, we describe the algorithms we used to check the results mentioned above.

\section{The minimum number of pieces of a perfect Mondrian partition.}\label{sec:Preliminaries}

Let $n$ and $m$ be two positive integers.
A {\em perfect Mondrian partition (PMP)} for the pair $(n,m)$ is a partition of a $n\times m$ rectangle in pairwise non-congruent rectangles that all have same area.
We include some easy observations about the structure of a PMP (if it exists). As all of these are necessary conditions for a PMP to exist they have been explicitly or implicitly used in  \cite{ o2018mondrian, dalfo2021new, dalfo2021decompositions,BlancheEureka}. We include them here for completeness and provide a new proof.

\begin{lemma} \label{lm:5pieces}
    A PMP must have at least 7 pieces.
\end{lemma}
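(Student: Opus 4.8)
The plan is to rule out each possible piece count $k \in \{1, 2, 3, 4, 5, 6\}$ in turn, using two basic constraints that any PMP must satisfy. First, since all pieces have equal area and the total area is $nm$, a partition into $k$ pieces forces each piece to have area $nm/k$, so in particular $k$ must divide $nm$. Second, and more powerfully, the pieces are required to be pairwise non-congruent, so we cannot reuse the same dimensions; since each rectangle has integer sides with product $nm/k$, the number of admissible distinct rectangle shapes of that fixed area is limited by the number of factorizations of $nm/k$, and more to the point, the geometry of fitting them together in a rectangle is highly constrained for small $k$.

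Let me sketch the elimination. The case $k=1$ is trivial (a single rectangle is congruent to the whole, nothing to partition). For $k=2$, any dissection of a rectangle into two rectangles is by a single straight cut, yielding two pieces that share a full side length, and equal-area forces them to be congruent — contradicting non-congruence. For $k=3, 4, 5, 6$ I would argue by the structure of rectangular dissections: in any such dissection there must be a piece occupying a corner, and by tracking the "fault lines" and the shared edges one shows that equal area together with the integer and non-congruence constraints cannot be met. A clean way to organize this is to consider the minimal vertical (or horizontal) strips: if the dissection has a full-length cut it splits into two smaller PMP-like subproblems, and if it has none it is a "faultfree" dissection, whose pieces must interlock in a way that, for $k \le 6$, forces at least two congruent rectangles.

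**The hard part will be** handling the faultfree (no full-length cut) configurations for $k = 5$ and $k = 6$, since these are exactly the cases where genuinely two-dimensional interlocking dissections first appear and a purely "reduce by a cut" argument breaks down. For these I expect to need a short case analysis on the corner pieces: place a rectangle $a \times b$ in one corner, use equal area $ab = nm/k$ to pin down the remaining strip dimensions, and then show that completing the tiling forces a repeated shape. I would keep the casework finite by exploiting symmetry (rotations and reflections of the bounding rectangle) to reduce the number of essentially distinct corner configurations, and by using the equal-area constraint aggressively to convert geometric adjacencies into linear relations among the side lengths that have no solution in distinct integer rectangles.

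Finally, to confirm that $7$ is not merely a lower bound but is achievable in spirit, I would recall that the original Blanche's dissection realizes a $7$-piece equal-area dissection (with irrational sides), which shows the bound in the lemma is sharp as a combinatorial threshold and explains why the argument must fail precisely at $k = 7$; this sanity check also guides the casework, since whatever obstruction kills $k \le 6$ must evaporate at $k = 7$.
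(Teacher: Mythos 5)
Your outline follows the same broad strategy as the paper (eliminate each piece count $k\le 6$ in turn, using equal area plus non-congruence, and reduce along fault lines where possible), but as written it has a genuine gap: the cases $k=5$ and $k=6$, which you yourself identify as ``the hard part,'' are exactly the substance of the lemma, and you do not actually carry them out. Saying that a ``short case analysis on the corner pieces'' will force a repeated shape is a statement of intent, not an argument. For $k=5$ the missing content is the identification of the unique faultfree configuration --- the pinwheel with one central piece --- together with the linear system relating the $a_i$ and $b_i$ (sums of widths equal to $n$, sums of heights equal to $m$, all products equal to $nm/5$), whose solution forces opposite corner pieces to be congruent; note that this works over the reals, so leaning on integrality, as your sketch does, is neither needed nor where the obstruction lives. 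For $k=6$ your faultfree-enumeration plan would in principle work but requires classifying all faultfree six-rectangle dissections, which you have not done; the paper instead uses a much shorter counting argument that you are missing entirely: some side of the bounding rectangle must meet at least three pieces (one piece per side reduces to the five-piece case, and two per side forces a doubled central piece), those three base pieces have pairwise distinct heights and none reaches the top, so the shortest needs at least two pieces above it and each of the other two needs at least one, giving at least $3+2+1+1=7$ pieces.

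Two smaller points. Your fault-line reduction for $k=6$ must handle the split $1+5$, where one side of the cut is a single full strip and is not itself a contradiction; you then need the five-piece impossibility for the other side, so the recursion has to be organized so that the faultfree five-piece case is settled first. And the closing paragraph about Blanche's dissection is a reasonable sanity check but contributes nothing to the proof of the lower bound.
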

\begin{proof}
    It is easy to see that any rectangular dissection of a rectangle with two or three pieces is such that at least two pieces will need to have the same width or height.
    
    A PMP with four different pieces has to be arranged in such a way that each piece touches exactly one corner  of the rectangle, otherwise it would contain a smaller three piece PMP.  The PMP cannot have exactly 4 pieces, as this would force them all to be congruent.

Hence if there was a PMP with five pieces , we must have a different rectangle touching every corner of the dissected rectangle and an internal rectangle that touches each piece in a configuration similar to that of Figure \ref{fig:5pMp}. As seen in the figure, for this configuration to be a PMP the heights and widths of the rectangles must satisfy the system of equations:

\begin{itemize}
     \item $a_i \times b_i = \frac{n \times m }{5}$,
     \item $a_1+a_2=n,\; a_1+a_3+a_4=n,\; a_4+a_5=n$,
     \item $b_1+b_5=m,\; b_2+b_3+b_5=m,\;b_2+b_4=m$.
\end{itemize}

Where $n,m$ are the lengths of the sides of the dissected rectangle. This system can be solved by elementary methods to show that any solution (be it in the real or integer numbers) forces the rectangles touching opposite corners to be congruent. Thus there is no PMP with exactly 5 pieces.

\begin{figure}
\begin{center}
\begin{tikzpicture}[scale=1]
    \draw (0,0) rectangle (180pt,180pt);    
    \draw (0pt,0) rectangle ++(120pt,60pt);
    \put(45,25) {$a_5 \times b_5$};
    \draw (120pt,0) rectangle ++(60pt,120pt);
    \put(135,55) {$a_4 \times b_4$};    
    \draw (180pt,180pt) rectangle ++(-120pt,-60pt);
    \put(105,145) {$a_2 \times b_2$};    
    \draw (0,180pt) rectangle ++(60pt,-120pt);
    \put(15,115) {$a_1 \times b_1$}
    \put(75,85) {$a_3 \times b_3$}
    
\end{tikzpicture}
\end{center}
\caption{The configuration of a theoretical 5-square perfect Mondrian partition.}\label{fig:5pMp}
\end{figure}
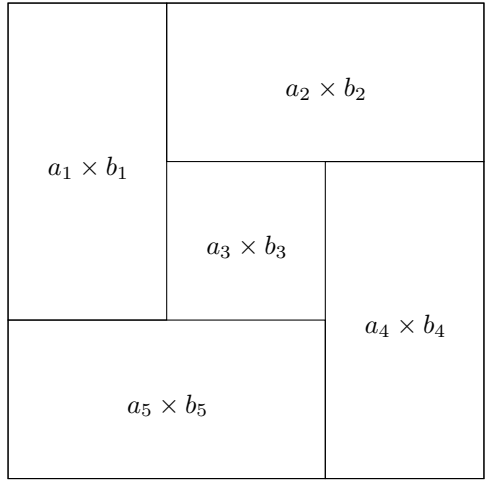

Suppose that there is a PMP with six pieces.  Then we can argue that there must be a side of the boundary  rectangle that touches at least three pieces. Firstly,  having only one piece touching one the sides of the boundary rectangle is not possible as the remaining five pieces would have to form a five piece PMP. Secondly, if exactly two pieces touched each of the sides of the boundary, we would be forced to have a configuration similar to that of Figure \ref{fig:5pMp} but with two pieces in the center; these pieces would have to be congruent. Thus there is a side of the dissected rectangle that touches at least three pieces of the partition, without loss of generality we may assume this side is the base. The rectangles touching the base all have different height and neither of them reaches the top, thus, in order to complete the partition, the one with the lowest height has to have at least two different rectangles on top and the other two have to have at least one additional rectangle on top, that makes a minimum of 7 rectangles.
\end{proof}

\section{Algorithms}\label{sec:Algorithm}

Let $n$ be the height and $m$ be the width of the rectangle we want to fill-in with non-congruent rectangles having the same area. Let $r$ be the number of pieces we want to use to cover the rectangle. Obviously, $r$ must be a divisor of $nm$ and each of the pieces will have area $\alpha := nm/r$. Additionally, as all rectangles must be pairwise non-congruent,  $\alpha$ has to have a large enough number of divisors, $\{d_1, d_2, \ldots, d_k\}$,   such that $d_i \leq n$ and $\alpha/d_i \leq m$ for all $i$ . For convenience we may assume that $d_1 < d_2 < \ldots < d_k$. 

Now consider the following set of tuples of widths and heights;
$${\mathcal P}(n,m,r) := \{d_1\times \alpha/d_1, \ldots, d_k\times \alpha/d_k\}.$$
In the vector $d_i = \alpha/d_{k+1-i}$. That is, piece $i$ is a 90 degrees rotation of piece $k+1-i,$ thus they are congruent and they cannot be simultaneously used in a successful perfect Mondrian partition. 

\subsection{(Side) Find pieces that could fill one side or the rectangle. }
The idea of this algorithm is to check, given a tuple $(n,m,r)$, if we can find a subset $\mathcal S:=\{d_{j_1}\times \alpha/d_{j_1}\ldots, d_{j_l}\times \alpha/d_{j_l}\} \subseteq \mathcal P(n,m,r)$ such that either the widths of the pieces sum to $n$, $\sum_{i:=1}^l d_{j_i} = n$, or the heights of the pieces sum to $m$, $\sum_{i:=1}^l \alpha/d_{j_1} = m$. 

Thus, this algorithm needs to solve a variation of the famously NP-complete \emph{Subset sum problem}. For small $n$ and $m$, we can use a simple backtracking approach to check if a solution exists and even find all solutions, this will become important in the next section.

We use  this approach in conjunction with Lemma~\ref{lm:5pieces} to show that there is no PMP of a $n \times n$-square for $n < 84$ . The first instance where there exists a set of at least two pieces whose width (or height) add up to $n$ occurs for $n=84$, $r = 7$, with pieces of area $\alpha = 84^2/7=1008$. In this case the set of possible tuples of width and height of  pieces is
$${\mathcal P}(84,84,7)= \{ 12 \times 84, 14 \times 72, 16 \times 63, 18 \times 48, 21 \times 42, 24 \times 36, 28 \times 28, \linebreak 36 \times 24, 42 \times 21,$$ $$\ldots , 84 \times 12 \}.$$ Figure~\ref{fillingoneside} shows the width of an $84 \times 84$ square filled bye a $12 \times 84$ rectangle and a $72 \times 14$ rectangle. A quick look at the remaining pieces allows us to see that the remaining two sides of the $84 \times 84$ square cannot be covered. This finding motivates the next algorithm.

\begin{figure}
\begin{center}
\begin{picture}(84,84)
    \put(0,0){\line(1,0){84}}
    \put(0,0){\line(0,1){84}}
    \put(84,84){\line(-1,0){84}}
    \put(84,84){\line(0,-1){84}}
    \put(12,0){\line(0,1){84}}
    \put(12,14){\line(1,0){72}}
\end{picture}
\end{center}
\caption{A $84 \times 84$ square with bottom horizontal side filled.}\label{fillingoneside}
\end{figure}
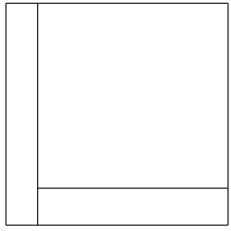

\subsection{(Perimeter) Find rectangles that can be placed around the perimeter. }
This algorithm is an extension of the previous one, its purpose is to check given a tuple $(n,m,r)$, if we can find a subset $\mathcal S:=\{d_{j_1}\times \alpha/d_{j_1}\ldots, d_{j_l}\times \alpha/d_{j_l}\} \subseteq \mathcal P(n,m,r)$ such that a combination of the widths and the heights of the pieces sum to $2(n + m)$, the perimeter of  an $n \times m$ rectangle. Unlike the previous case, here the geometry plays a bigger role as both the width and  height of the four corner pieces count for the sum, which is not the case for the rest of the pieces. 

Our algorithm checks first for which tuples $(n,m,r)$ there exists a set of pieces that can cover the perimeter of the square and later it checks if  there is still enough space remaining in the enclosed area to place at least one of the remaining pieces.

The algorithm begins  by using the ''filling one side'' algorithm described in the previous section to find all subsets of the rectangles with a total height of $n$ and all subsets of the rectangles with a total width of $m$. We call the former type a ''vertical subset'' and the latter a ''horizontal subset''. We say that a vertical subset is neighboured to a horizontal subset, if both subsets share exactly one rectangle and we save this relationship in a simple lookup table. For the purpose of filling the perimeter of the rectangle, we can use neighboured subsets to fill adjacent sides where the shared rectangle is placed in the corner.

In practice, the algorithm goes through each vertical subset and checks for \emph{every pair} of its neighboured horizontal subsets if they share another vertical subset as neighbour. In other words, the algorithm tries to find a four cycle of adjacencies that alternates between vertical and horizontal subsets. Such a set is a potential candidate for a perimeter of the $n \times m$-rectangle.  In addition, the algorithm must check that the four selected subsets do not contain duplicate (congruent) rectangles with the exception of the four corner rectangles; and that the union of the subsets contains at most $r$ rectangles (since the PMP we are seeking must have  $r$ rectangles in total). If all those conditions are met the four subsets of rectangles are a viable candidate for a filling the perimeter of a PMP of a rectangle. Note that this approach does not fix the order of the rectangles not placed in a corner.

When running the computations in the case of squares for this second part we find that the first possible $n$ (regardless of $r$) for which we can find a perimeter is 360. Therefore, we are already able to affirm that there is no PMP-square for $n<360$.

A perimeter candidate  for a square with sides $n,m=360$ has  $r = 12$,
horizontal sides covered by the rectangles 
$$[240 \times 45, 120 \times 90], [60 \times 180, 100 \times 108, 200 \times 54],$$
and vertical sides covered by the rectangles
$$[240 \times 45, 80 \times 135, 60 \times 180], [120 \times 90, 50 \times 216, 200 \times
54].$$
The remaining rectangles not used in the perimeter are:
$$36 \times 300, 40 \times 270,  48 \times 225, 72 \times 150, 75 \times 144.$$ Notice that in this instance the five remaining rectangles must be used in the partition as $r=12.$
Figure~\ref{perimeter2} shows a possible arrangement of the perimeter listed above.


\subsection{(Gap) Fitting pieces in the smallest gaps formed by the perimeter.}

Once a potential perimeter has been computed, we can look at the rectangle with the lowest height on the bottom side of the perimeter. This rectangle is sandwiched between two other rectangles, if it is not a corner rectangle, or it has a rectangle lying on top of it on one side and a rectangle with larger height on the other side. Either way there is a \emph{horizontal gap} that forms on top of said rectangle with the lowest height. This gap has a certain width that has to be filled exactly by one or several rectangles.  For example, Figure \ref{perimeter2}, shows such a gap. In that instance the width of the gap is 160 and it has to be filled in with the following remaining rectangles:
$$36 \times 300, 40 \times 270,  48 \times 225, 72 \times 150, 75 \times 144.$$
We implemented a check to see if the remaining pieces can fill this gap. Notice that this process is indifferent to which side we have chosen to be the \emph{bottom side} , thus the check can be applied to the four sides.

In the example above, the gap left to be filled on the right side is of lenght $216$, it is not hard to check that it is impossible to add $216$ exactly with a combination of the heights or the widths of the rectangles in the list above. So this case can be discarded. 

Utilising the previous three checks (Side, Perimeter, Gap) we are already able to say that there is no PMP-square for $n< 420$.

In the example above, notice that, for the gap of length $160$ left on the bottom side we do have a possible combination of heights and widths of rectangles adding to $160$, namely 40+48+72. However, as the height of the rectangle with width of $40$ is $270$ a quick look at Figure \ref{perimeter2} allows us to see that this rectangle would intersect the rectangles in the top side of the square. This phenomenon has inspired the next check.

\begin{figure}
\begin{center}
\begin{tikzpicture}
    \draw (0,0) rectangle (360pt,360pt);
    
    \draw (0pt,0) rectangle ++(240pt,45pt);
    \put(100,20) {$240 \times 45$};
    \draw (240pt,0) rectangle ++(120pt,90pt);
    \put(280,40) {$120 \times 90$};
    
    \draw (0pt,180pt) rectangle ++(60pt,180pt);
    \put(10,270) {$60 \times 180$};

    \draw (60pt,360pt) rectangle ++(100pt,-108pt);
    \put(85,300) {$100 \times 108$};
    \draw (160pt,360pt) rectangle ++(200pt,-54pt);
    \put(235,330) {$200 \times 54$};
    
    \draw (0,45pt) rectangle ++(80pt,135pt);
    \put(25,110) {$80 \times 135$};
    
    \draw (360pt,90pt) rectangle ++(-50pt,216pt);
    \put(315,200) {$50 \times 216$};

    \draw[<->] (80pt,70pt) -- (240pt,70pt);
    \put(150,75) {$160$};

    \draw[<->] (290pt,90pt) -- (290pt,306pt);
    \put(270,190) {$216$};
    
\end{tikzpicture}
\caption{An example of a perimeter with $n=360$ with a hole to fit.}\label{perimeter2}
\end{center}
\end{figure}
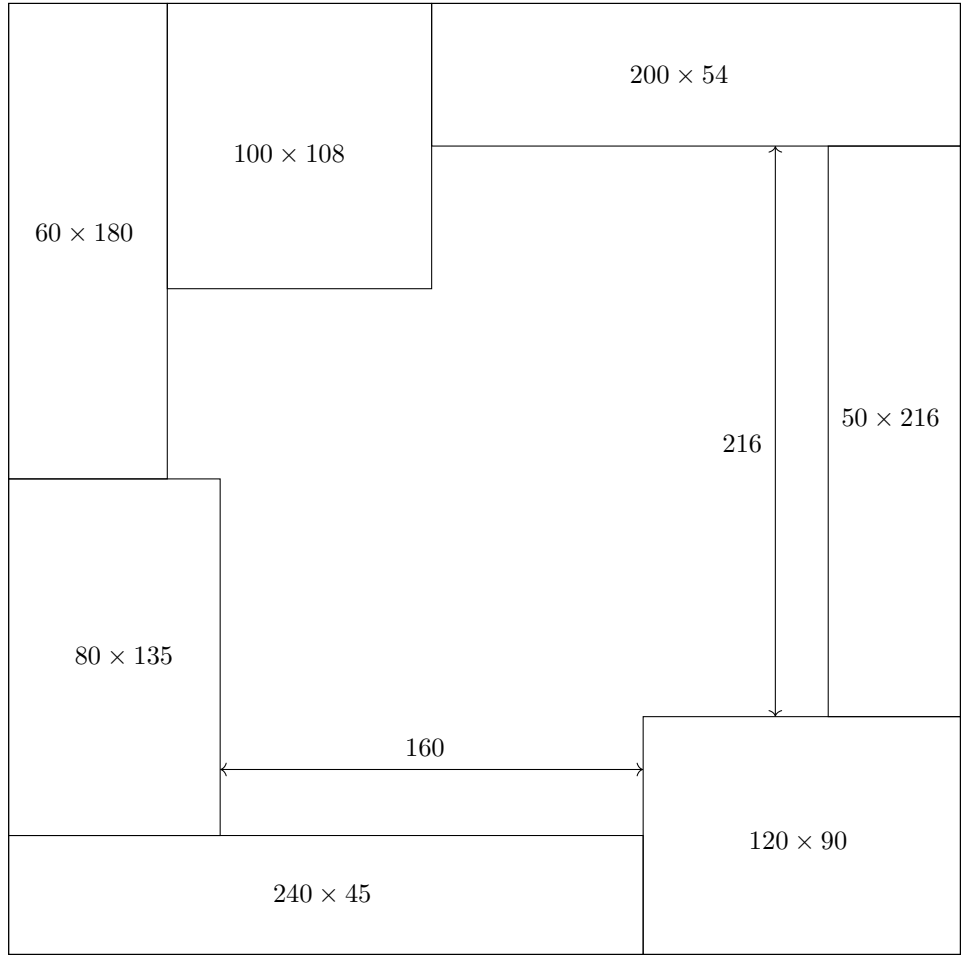

\subsection{(Hole) Are the remaining rectangles small enough to fit in the hole formed by the perimeter? }

Once a potential perimeter has been computed, we can easily obtain an upper bound on the size of the hole formed by the perimeter. An upper bound on the width of the hole, the \emph{mwh}, can be obtained by deducting from $n$ the two minimum widths of the rectangles in each of the two vertical subsets. Similarly, an upper bound on the height of the hole, the \emph{mhh}, can be computed by deducting from $m$ the two minimum heights of the rectangles in each of the two horizontal subsets.

We  know that every rectangle not in the perimeter has to have a   width smaller or equal than the \emph{mwh} and a height smaller or equal than the \emph{mhh}. Thus, if not enough of the remaining rectangles satisfy this property, we can eliminate the case. For instance, in the example that we have been using $mwh=250$ and $mhh=261$ (see Figure~\ref{perimeter3}), but we also know that we need to place all 5 rectangles not in the perimeter in the hole. However, in the list of remaining rectangles, we have are rectangle of size $36 \times 300$ which clearly cannot fit horizontally or vertically in the hole. Hence, we discard this case.

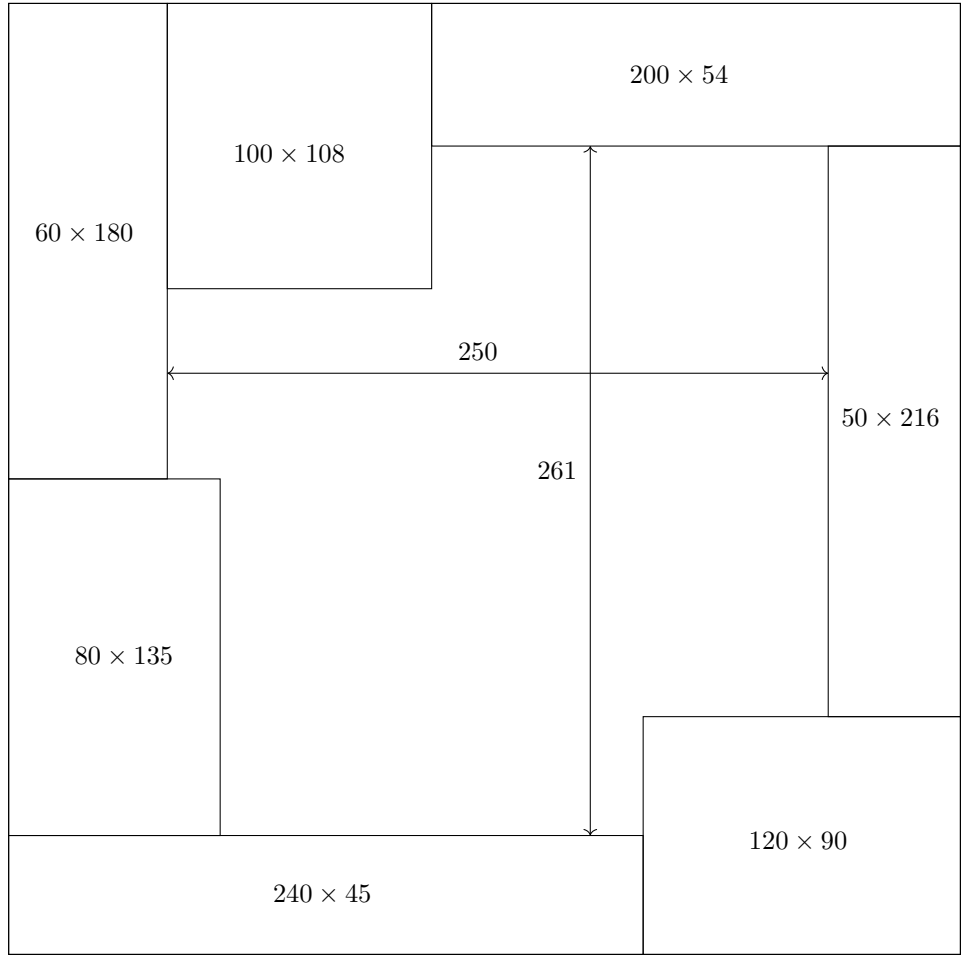
\begin{figure}
\begin{center}
\begin{tikzpicture}
    \draw (0,0) rectangle (360pt,360pt);
    
    \draw (0pt,0) rectangle ++(240pt,45pt);
    \put(100,20) {$240 \times 45$};
    \draw (240pt,0) rectangle ++(120pt,90pt);
    \put(280,40) {$120 \times 90$};
    
    \draw (0pt,180pt) rectangle ++(60pt,180pt);
    \put(10,270) {$60 \times 180$};

    \draw (60pt,360pt) rectangle ++(100pt,-108pt);
    \put(85,300) {$100 \times 108$};
    \draw (160pt,360pt) rectangle ++(200pt,-54pt);
    \put(235,330) {$200 \times 54$};
    
    \draw (0,45pt) rectangle ++(80pt,135pt);
    \put(25,110) {$80 \times 135$};
    
    \draw (360pt,90pt) rectangle ++(-50pt,216pt);
    \put(315,200) {$50 \times 216$};

    \draw[<->] (60pt,220pt) -- (310pt,220pt);
    \put(170,225) {$250$};
    \draw[<->] (220pt,45pt) -- (220pt,306pt);
    \put(200,180) {$261$};
    
\end{tikzpicture}
\caption{An example of a perimeter with $n=360$.}\label{perimeter3}
\end{center}
\end{figure}

Saddly, adding this check does not allow us to immediately improve the previous bound for $n$ in the case of squares. 

However, implementing all these checks together, we can greatly reduce the number of cases where a  \emph{brute force computation} of all possible cases is needed. For the case of squares, only the following values of $n<1001$ could not be ruled out by the Side, Perimeter, Gap and Hole checks:
\begin{equation} \label{list:remainingValues}
    \{420, 480, 630, 660, 720, 780, 840, 900, 924, 960, 990\}
\end{equation}
 
For these cases we implemented a backtracking algorithm that we describe in the next section.

\subsection{The backtracking algorithm}
This simple backtracking algorithm will take as input the width and height of the base rectangle $n$ and $m,$ respectively, and output a PMP if there is one and FALSE otherwise.

What follows is an outline of the logic behind the algorithm. The full code is accessible in the following repository.
\begin{center}
\url{https://github.com/PhoenixSmaug/Mondrian}
\end{center}
Given a pair $(n,m)$ of integers, start by generating the set of all divisors of $n \times m$,  and for each divisor $r\geq 7$ generate the set of all possible tuples of widths and heights $${\mathcal P}(n,m,r) := \{d_1\times \alpha/d_1, \ldots, d_k\times \alpha/d_k\}$$ with product equal to $\frac{n \times m}{r},$,   such that $d_i \leq n$ and $\alpha/d_i \leq m$ for all $i.$ If the cardinality of ${\mathcal P}(n,m,r) $ is less than $r$ the case can be discarded. Else, proceed to use the rectangles to fill in the base of the $n \times m$--rectangle from left to right. 

The data structure we used to keep track of the locations, inside the main rectangle, that are covered by rectangles at any given time during the execution is an integer vector, $V,$ of size $n$,  that contains, at position $i$ the height already covered at that width, following ideas of~\cite{Chazelle}.

At any time during the execution, the position chosen as the bottom left corner of the next rectangle to be placed is  the smallest index $i \in [n]:=\{1, \ldots,n\}$ such that $V[i]= \min_{i \in [n]} \{V[i]\}.$ It follows that, when a piece of width $w$ and height $h$ has to be placed in the main rectangle, we first need to check that it can indeed be placed in the horizontal gap available, i.e. $V[j]=V[i]$ for all $j \in \{i, \ldots, i+w-1\}$. If that check succeeds, then we fill the vector $V$ with value $h+V[j]$ for entries $j=i$ to $i+w-1.$ We must also keep track of the rectangles that have already been used in order to make sure we do not reuse them or their 90-degree-rotations, and also to be able to backtrack easily.

In the case where $n=m$, we are dealing with a square and we can take advantage of the fact that it has a group of symmetries of order 8 so, we can potentially avoid making the same computation 8 times. In order to speed up the search,  we make use of two of the symmetries of the square, the \emph{vertical mirror symmetry} defined by the vertical line through the midpoints of the two horizontal sides of the square, and a \emph{diagonal symmetry}, defined by the line through the bottom left corner and the top right corner of the square, see Figure~\ref{fig:symmetries}). This will allow us to avoid making the same computation 4 times. We believe it will rarely happen that the top left corner will be reached by the backtracking algorithm and hence, implementing a check to use a third symmetry to reduce even more the number of cases to be considered will end up increasing computing time.
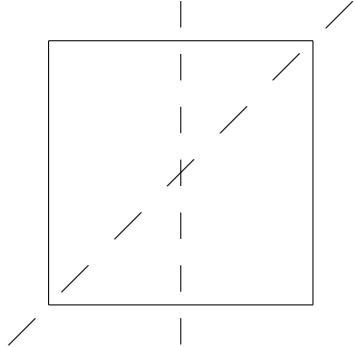
\begin{figure} \label{fig:symmetries}
\begin{picture}(140,140)
\put(15,15){\line(1,0){100}}
\put(15,15){\line(0,1){100}}
\put(115,115){\line(-1,0){100}}
\put(115,115){\line(0,-1){100}}
\multiput(0,0)(20,20){7}{\line(1,1){10}}
\multiput(65,0)(0,20){7}{\line(0,1){10}}
\end{picture}
\caption{The two symmetries of the square used to reduce the search tree.}
\end{figure}
The chosen diagonal symmetry of the square allows to restrict the search to the cases where the width and the height of the first rectangle placed satisfies $d_i \leq \alpha/d_i.$ This holds because if there was a PMP corresponding to starting with such rectangle there is a PMP corresponding to starting with the rotation of  such rectangle. Therefore, we can restrict the first choice to the set of rectangles $\{d_1\times \alpha/d_1, \ldots, d_{\ceil{\frac{n}{2}}}\times \alpha/d_{\ceil{\frac{n}{2}}}\}.$
The vertical mirror symmetry implies that,  when considering a piece in the bottom left corner that has width $d_i$ and height $\alpha/d_i$, then the only pieces to be considered for placement at the bottom right corner must have width $d_j$ and height $\alpha/d_j$ with $i < j$. Furthermore, as the rectangles with index $j \geq k+1-i$ are rotations of the rectangles with index  $j \leq i,$ the diagonal symmetry allows to conclude that we can further restrict the rectangle to be placed in the bottom right corner to have an index satisfying $j < k+1-i$.

When $n\neq m$, the diagonal symmetry cannot be used anymore but the vertical one can. However, in our computations we did not use the vertical symmetry as it was not drastically reducing the computing time.

\subsection{Benchmarks}
The backtracking algorithm described is very slow, as such, it is better to use it only for the values of $n<1001$ for which we did not obtain a negative answer with the Side, Perimeter, Gap and Hole algorithms. That is, the values in Equation \ref{list:remainingValues}. Out of the values in the list, the hardest cases to compute for the backtracking algorithm were for $n=840$ and $r=20$ or $21$. 

\begin{figure}[h]
    \centering
    \includegraphics[width=1\textwidth]{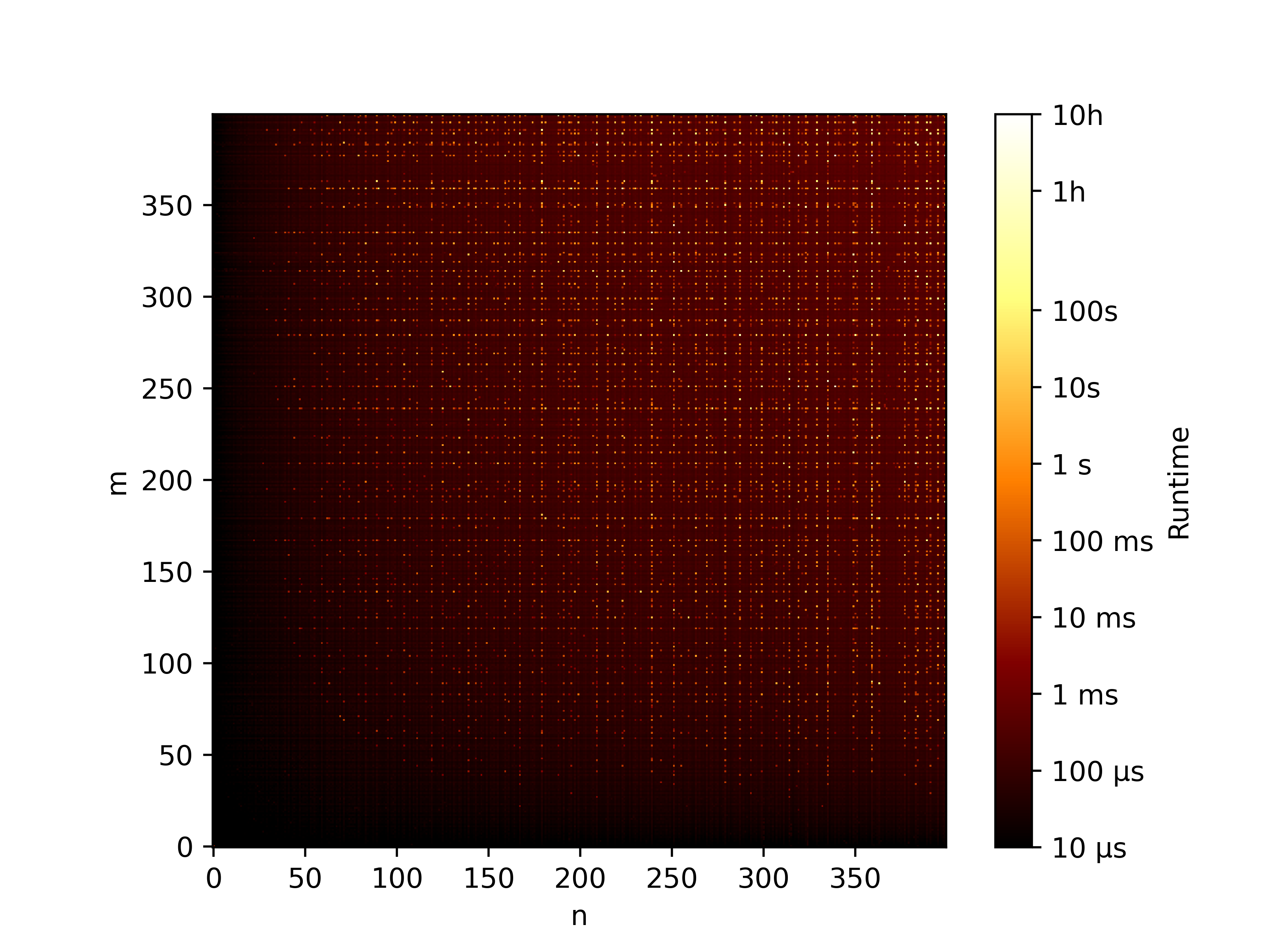}
    \caption{Average run-time after 10 samples of checking for each $n \times m$ rectangle with $n, m \leq 400$ if a perfect Mondrian partition exists. Since you can always rotate the rectangle such that $n \geq m$, the data is mirrored along the diagonal. The data was collected using the Julia library ''BenchmarkTools.jl'' on a PC with an ''AMD Ryzen 9 5950X''.}
\end{figure}

The logarithmic colour plot visualizes the exponential run-time of the problem, which depends strongly on the number of divisors of $nm$. This behavior produces distinct lines of higher run-time for side lengths with many divisors, such as $240$, $256$, and $360$. On the lower left one can also see a quarter circle where the product $nm$ is just too small. It should be noted that the number of divisors is only an orientation of the run-time and, as with many other NP-complete problems, it is very difficult to predict how difficult a particular case is.

The cutoff $n, m \leq 400$ was chosen because the run-time increases even further and the one-time check for each $420 \times m$-rectangle with $m \leq 420$ already takes more than two weeks. The $840 \times 840$ rectangle has a run-time of about 400 days, although it should be noted that this is only an estimate of the program itself and we checked for a Mondrian partition in this case using parallel computation. To do this, we went through all the combinations for the placement of the first two rectangles and then dispatched a thread to check if the rest of the rectangles fit.
\section*{Acknowledgements}
Dimitri Leemans acknowledges financial support from the Communauté Française Wallonie Bruxelles through an Action de Recherche Concertée grant.

\bibliographystyle{abbrvnat}

\end{document}